\newcommand{\subsectionruninhead}{\@startsection{subsection}{2}{0mm}
{-\baselineskip}{-0mm}{\bf\large}}
\newcommand{\subsubsectionruninhead}{\@startsection{subsubsection}{3}{0mm}
{-\baselineskip}{-0mm}{\bf\normalsize}}
\newtheorem*{theorem*}{Theorem}
\newtheorem{proposition}{Proposition}[section]
\newtheorem*{proposition*}{Proposition}
\newtheorem*{corollary*}{Corollary}
\newtheorem*{claim*}{Claim}
\theoremstyle{definition}
\newtheorem*{definition*}{Definition}
\newtheorem*{definitions*}{Definitions}
\theoremstyle{remark}
\newtheorem*{remark*}{Remark}
\newtheorem*{remarks*}{Remarks}
\numberwithin{equation}{section}
  \def\CC{{\mathbb C}}
 \def\NN{{\mathbb N}}  
 \def\RR{{\mathbb R}} \def\SS{{\mathbb S}} \def\TT{{\mathbb T}}
 \def\ZZ{{\mathbb Z}}
\def\cB{\mathcal{B}}    
\def\cC{\mathcal{C}}    
\def\cD{\mathcal{D}}    \def\cV{\mathcal{V}}
\newcommand{\diff}{\operatorname{Diff}}
\newcommand{\id}{\operatorname{Id}}
\newcommand{\diam}{\operatorname{Diam}}
\begin{document}
\title{A dynamical decomposition of the torus into pseudo-circles}
\author{Fran\c{c}ois B\'eguin, Sylvain Crovisier, Tobias J\"ager}
\maketitle

\begin{flushright}
\it To the memory of Dmitri V. Anosov.
\end{flushright}

\begin{abstract} We build an irrational pseudo-rotation of the
  $2$-torus which is semiconjugate to an irrational rotation of the
  circle in such a way that all the fibres of the semi-conjugacy are
  pseudo-circles. The proof uses the well-known {\em
    `fast-approximation method'} introduced by Anosov and Katok. 
\end{abstract}

\section{Introduction} It is well known that continua
(connected compact metric spaces) with complicated structure naturally
appear in smooth surface dynamics.  A striking example is provided by
the \emph{pseudo-circle}, introduced by Bing~\cite{bing} and
characterized by Fearnley~\cite{fearnley2}.  It is a continuum
which:
\begin{itemize}
\item[--] can be embedded in $S^2$ and separates,
\item[--] is circularly chainable: it admits coverings into compact subsets
  $(A_i)_{i\in \ZZ/n\ZZ}$ whose diameter are arbitrarily small, such that
  $A_i\cap A_j\neq \emptyset$ if and only if if $i=j\pm 1$ or $i=j$,
\item[--] is indecomposable:
it cannot be written as the union of two proper continua,
\item[--] and whose non-trivial proper subcontinua are
  indecomposable, homogeneous (any point can be sent on any other
  point by some homeomorphism) and all homeomorphic to the same
  topological space (called the \emph{pseudo-arc}).
\end{itemize}

Handel~\cite{handel} has built a smooth diffeomorphism of $\SS^2$
preserving a minimal invariant set homeomorphic to the
pseudo-circle.  Later, Prajs~\cite{prajs} has constructed a
partition of the annulus into pseudo-arcs, and likewise his method
could be used to produce partitions of the torus into
pseudo-circles. It was not known, however, if such a pathological
foliation could be `dynamical', that is, invariant under the
dynamics of a torus homeomorphism or diffeomorphism that permutes
the leaves of the foliation. Conversely, if a homeomorphism of the
two-torus is semiconjugate to an irrational rotation of the circle,
one may wonder whether most, or at least some, of the fibres of the
semi-conjugacy must have a simple structure or even be topological
circles. We give a positive answer to the first and a negative to
the second of these questions. Denote by $\TT^d=\RR^d/\ZZ^d$ the
$d$-dimensional torus and by $\diff^\omega_{\text{vol},0} (\TT^2)$
the space of real-analytic diffeomorphisms of $\TT^2$ that are
isotopic to the identity and preserve the canonical volume. 
\begin{theorem*} \label{t.main} There exists a minimal diffeomorphism $f\in
  \diff^\omega_{\text{vol},0} (\TT^2)$ whose rotation set is reduced to a unique
  totally irrational vector and which preserves a partition $\cC$ of $\TT^2$
  into pseudo-circles.

Moreover there exists a continuous map $p\colon \TT^2\to \TT^1$
which semi-conjugates $f$ to an irrational rotation. The elements of $\cC$
are the pre-images $p^{-1}(x)$.
\end{theorem*}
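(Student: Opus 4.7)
The plan is to apply the Anosov-Katok fast-approximation method, constructing $f$ as the limit of a sequence $f_n = H_n \circ T_n \circ H_n^{-1}$, where $T_n$ is the rotation of $\TT^2$ by $(\alpha_n, \beta)$ with $\alpha_n = p_n/q_n$ rational and $\beta$ a fixed irrational, and $H_n = h_1 \circ \cdots \circ h_n$ is a composition of volume-preserving real-analytic diffeomorphisms of $\TT^2$, each $h_k$ commuting with the finite-order rotation $R_{(1/q_k, 0)}$. By choosing $\alpha_{n+1}$ close enough to $\alpha_n$ compared to the rapidly growing analytic norm of $H_n$, the sequence $(f_n)$ will converge in $\diff^\omega_{\mathrm{vol},0}(\TT^2)$ to the desired diffeomorphism $f$. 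Writing $\alpha = \lim \alpha_n$ and choosing the $\alpha_n$ so that $(1, \alpha, \beta)$ is rationally independent, we obtain a totally irrational candidate rotation vector.

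For the semi-conjugacy, set $p_n := \pi_1 \circ H_n^{-1}$; by construction $p_n \circ f_n = R_{\alpha_n} \circ p_n$. If each $h_n$ is designed so that $\| \pi_1 \circ h_n^{-1} - \pi_1 \|_\infty \le 2^{-n}$, then $(p_n)$ is Cauchy in $C^0$ and converges uniformly to a continuous $p \colon \TT^2 \to \TT^1$ satisfying $p \circ f = R_\alpha \circ p$. The smooth circles $p_n^{-1}(x) = H_n(\{x\} \times \TT^1)$ will converge, in the Hausdorff sense, to the fibres $p^{-1}(x)$ which form the partition $\cC$, and $f$ permutes this partition because $p$ semi-conjugates $f$ to $R_\alpha$. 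Minimality of $f$ will be forced in standard Anosov-Katok fashion (requiring $\epsilon_n$-density of $f_n$-orbits with $\epsilon_n \to 0$), and the rotation set of $f$ will reduce to $\{(\alpha,\beta)\}$ by upper semi-continuity combined with the rigidity provided by the semi-conjugacy (which pins down the first coordinate of any rotation vector) and a parallel control on the second coordinate of $h_n$.

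The main work, and the principal obstacle, lies in designing the perturbations $h_n$ so that the limit fibres $p^{-1}(x)$ are pseudo-circles rather than topological circles. The idea is to model the cumulative deformation of the leaves on Bing's original construction of the pseudo-circle as a nested intersection of increasingly crooked circular chains: at each stage, $h_n$ adds one generation of crookings to each leaf at a scale commensurate with $1/q_n$ and compatibly with the required $R_{(1/q_n, 0)}$-equivariance, so that in the limit each leaf satisfies the Fearnley characterization (circularly chainable, hereditarily indecomposable, with all proper subcontinua homeomorphic to the pseudo-arc). The delicate tension to overcome is that each $h_n$ must crookify the foliation strongly enough topologically to produce a pseudo-circle in the limit, yet perturb the first coordinate weakly enough in sup norm to preserve the convergence of $(p_n)$, and all this within the real-analytic, volume-preserving, equivariant class. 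Once such $h_n$ are produced, the rest of the verification is a synthesis of standard Anosov-Katok convergence, Bing--Fearnley topology of the pseudo-circle, and Hausdorff continuity for the partition $\cC$.
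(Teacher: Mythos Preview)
Your outline captures the overall architecture correctly---Anosov--Katok approximation, convergence of the projections $p_n=\pi_1\circ H_n^{-1}$, and Bing-style crooking of the successive foliations---but there is a genuine obstruction in the specific scheme you propose.

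You take $T_n=R_{(\alpha_n,\beta)}$ with $\beta$ a \emph{fixed irrational} and ask only that each $h_k$ commute with the horizontal rotation $R_{(1/q_k,0)}$. For the Anosov--Katok convergence step to go through (so that $f_n=H_{n+1}\circ T_n\circ H_{n+1}^{-1}$ and hence $f_{n+1}$ can be made close to $f_n$ merely by taking $\alpha_{n+1}$ close to $\alpha_n$), the new conjugacy $h_{n+1}$ must commute with the \emph{full} rotation $T_n=R_{(\alpha_n,\beta)}$, not just with its horizontal part. But commuting with $R_{(0,\beta)}$ for irrational $\beta$ forces $h_{n+1}$ to commute with \emph{every} vertical translation: writing $h_{n+1}=(u,v)$ one gets $u(x,y+\beta)=u(x,y)$, hence $u$ depends only on $x$, and $h_{n+1}$ sends each vertical circle $\{x\}\times\TT^1$ onto another vertical circle. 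Consequently every $H_n$ preserves the vertical foliation, the fibres $p_n^{-1}(x)$ remain straight circles for all $n$, and no crooking whatsoever is possible. The two demands you place on $h_{n+1}$---enough commutation for convergence, enough deformation for crooking---are mutually exclusive under your choice of $T_n$.

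The paper resolves this by taking $\alpha_n$ rational in \emph{both} coordinates, so that $R_{\alpha_n}$ has finite order. The conjugacy $h_{n+1}$ is then produced by flowing along a rational linear flow $\varphi_{n+1}$ whose time-$1$ map is exactly $R_{\alpha_n}$: one sets $h_{n+1}(z)=\varphi_{n+1}\bigl(-\Theta_{n+1}(z)/(p_nb_{n+1}),\,z\bigr)$, where $\Theta_{n+1}$ is a $\varphi_{n+1}$-invariant function built from a $(4N_n)^{-1}$-crooked circle map $\theta_n$. Commutation with $R_{\alpha_n}$ is then automatic, while the nearly-vertical (but not exactly vertical) direction of $\varphi_{n+1}$ lets $h_{n+1}^{-1}$ crook the vertical circles in the $y$-direction with an arbitrarily small effect on the $x$-coordinate, controlled by choosing the integer slope parameter $b_{n+1}$ large. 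This flow mechanism is exactly what dissolves the ``delicate tension'' you correctly identify; without it (or an equivalent device) the plan does not go through.
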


This result has implications for a number questions that naturally come up in
the rotation theory on the torus and, more specifically, the dynamics of
irrational pseudo-rotations. We discuss these issues in more detail in
Section~\ref{s.unique}, alongside with the uniqueness of the
semi-conjugacy.
\medskip

\noindent {\it Idea of the construction.}  The diffeomorphism $f$ is
obtained as limit of a sequence of diffeomorphisms $f_n$ that are
conjugated to rational rotations $R_{\alpha_n}$ by diffeomorphisms
$H_n$ isotopic to the identity, following the celebrated
Anosov-Katok method, see~\cite{fayad-katok}. As a side effect, this
means that its dynamics can be made uniquely ergodic, although we will
not expand on this. Note that most of the constructions using this
method deal with the $C^\infty$ category; some cases,
as~\cite{fayad-katok2} allow to work in the real-analytic category.
  The sequence $f_n=H_n^{-1}\circ R_{\alpha_n}\circ H_n$ is
obtained inductively. At stage $n$, the diffeomorphism $f_n$
preserves the foliation $\cV_n$ by vertical circles
$H_n^{-1}(\{x\}\times \TT^1)$. The main requirement is to have the
circles of the foliation $\cV_{n+1}$ arbitrarily close to the circles
of $\cV_n$ in the Hausdorff topology, but more crooked. Then the
partitions $\cV_n$ will converge to the partition into
pseudo-circles.  The foliation $\cV_{n+1}$ is built inductively in
the chart defined by the conjugacy $H_n$, as the preimage of the
foliation into vertical lines under a new homeomorphism $h_{n+1}$. A
sketch of the construction is given in Figure~\ref{f.strategy}.  Since
we then let $H_{n+1}=h_{n+1}\circ H_n$, this ensures that the
foliation $H_{n+1}(\cV_{n+1})$ is again the one given by vertical
circles. In order to obtain $h_{n+1}$, one first builds a leaf of
$\cV_{n+1}$ in the chart given by $\cV_n$ which is crooked with
respect to the vertical circles in such a way that the leaf is
transverse to a linear flow $\varphi_{n+1}$. This first leave is the
image of some vertical circle. One then obtains the complete
foliation (and thus the definition of $h_{n+1}$) by pushing this
initial leave by the flow. 
\begin{figure}[h!]
\vspace{-0.3cm}
\begin{center}
\includegraphics[width=0.9\linewidth]{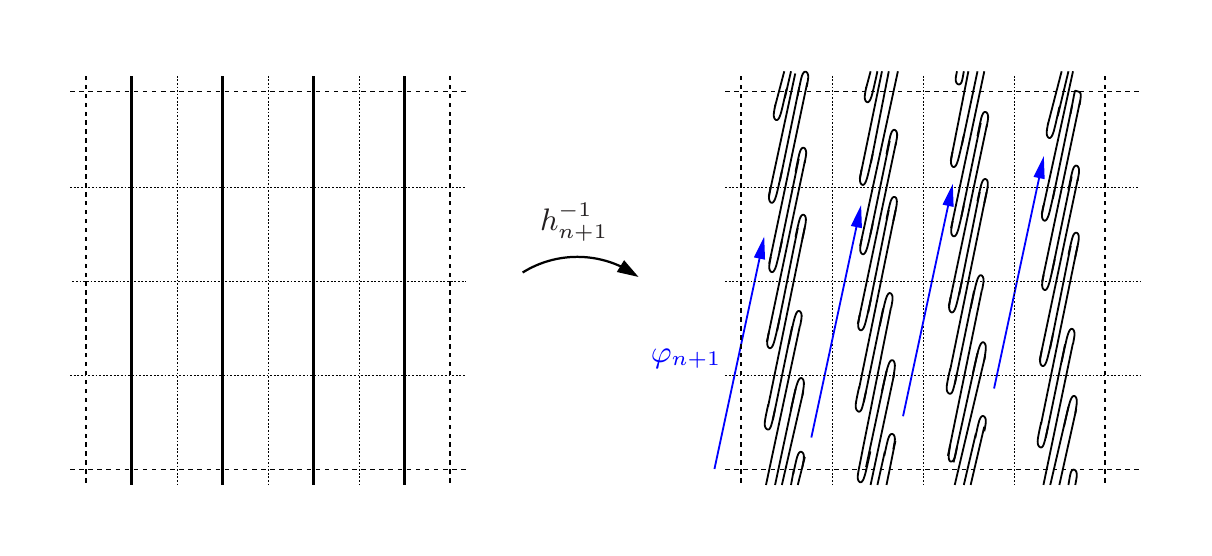}
\end{center}
\vspace{-1cm}
\caption{\it In the coordinates defined by $H_n$ (given by the dashed lines),
  $h_{n+1}^{-1}$ maps straight lines (solid lines on the left) to crooked leaves
  of $\cV_{n+1}$ (on the right). The latter are translates of each other along
  the flow lines of $\varphi_{n+1}$. \label{f.strategy}}
\end{figure}
\medskip

We note that A. Avila has announced recently the construction of an
element of $\diff^\omega_{\text{vol},0} (\TT^2)$ whose rotation set
is a non-trivial compact interval contained in line with irrational
slope and which does not contain any rational point (a
counter-example to one case of a conjecture by Franks and
Misiurewicz~\cite{franks-misiurewicz}).  His construction may
somewhat be compared to ours: the diffeomorphism is obtained as the limit
of diffeomorphisms acting periodically on the leaves of a foliation
by circles. In his case however the homotopy class of the leaves is
modified at each stage of the construction.

\section{A criterion for the existence of a partition into pseudo-circles}

\paragraph{2.a -- Crooking.}\label{Crooking}
The construction of the pseudo-circle uses the following notions.

\begin{definitions*} A \emph{circular chain} is finite family of
  sets $\cD=\{D_{\ell}, \ell\in \ZZ/N\ZZ\}$ such that $D_{k}$
  intersects $D_{\ell}$ if and only if $k-\ell\in \{-1,0,+1\}$.  A
  circular chain $\cD'=\{D'_{i}, i\in \ZZ/N'\ZZ\}$ said to be
  \emph{crooked inside} another circular chain $\cD=\{D_{\ell},
  \ell\in \ZZ/N\ZZ\}$ if there exists a map $\ell \colon \ZZ/N'\ZZ\to
  \ZZ/N\ZZ$ with the following properties.
\begin{itemize}
\item[--] $D'_{i}\subset D_{\ell(i)}$ for each $i\in \ZZ$;
\item[--] if $i<j$ are such that for all $i<k<j$ the element $\ell(k)$
  belongs to the same interval bounded by $\ell(i)$ and $\ell(j)$
  (either positively or negatively oriented in $\ZZ/N\ZZ$) and the
  length of this interval is greater than 4, then there exists $u,v$
  with $i<u<v<j$ such that $d(\ell(u),\ell(j))\leq 1$ and
  $d(\ell(v),\ell(i))\leq 1$. (Here $d$ denotes the canonical distance
  on $\ZZ/N\ZZ$.) \end{itemize}
\end{definitions*}

The pseudo-circle can then be obtained as follows. For the sake of consistency
with the later sections, we work in the torus instead of $\RR^2$ and require
that the circular chains -- and thus the resulting pseudo-circle -- are
homologically non-trivial. 

\begin{theorem*}[\cite{bing,fearnley2}] Consider a sequence
  $(\cD_n)_{n\geq 0}$ of circular chains of open topological disks in
  $\TT^2$.  Assume that
\begin{itemize}
\item[--] $\cD_{n+1}$ is crooked inside $\cD_{n}$ for each $n$,
\item[--] the closure of $\bigcup_{D\in \cD_{n+1}} D$ is contained in
  $\bigcup_{D\in \cD_n} D$ for every $n$, 
\item[--] the maximal diameter of the elements of $\cD_n$ goes to zero as $n\to
  +\infty$.
\item[--] the union $\bigcup_{D\in\cD_n} D$ contains homotopically non-trivial loops
  of a unique homotopy type $v\in\ZZ^2\setminus \{0\}$.
\end{itemize}

Then the compact set $X:=\bigcap_{n\geq 0}\bigcup_{D\in\cD_n} D$ is
homeomorphic to the pseudo-circle. Moreover, $X$ is an annular continuum of
homotopy type $v$ (see \cite{JP,JT}).
\end{theorem*}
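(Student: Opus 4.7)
The plan is to verify the hypotheses of the Bing-Fearnley characterization of the pseudo-circle, namely that $X$ is a hereditarily indecomposable, circularly chainable continuum admitting a planar embedding as a separating set. The argument is essentially Bing's original one, adapted to the torus.

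First, I would check that $X$ is a non-empty continuum which is circularly chainable. Each open set $U_n := \bigcup_{D\in\cD_n} D$ is connected (since consecutive disks in a circular chain overlap), and by hypothesis $\overline{U_{n+1}} \subset U_n$, so $X = \bigcap_n \overline{U_n}$ is a non-empty compact connected set. The traces $\{D \cap X : D \in \cD_n\}$ form a circular chain covering of $X$ with diameters tending to zero, witnessing that $X$ is circularly chainable by definition.

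The main obstacle is to deduce hereditary indecomposability from the crookedness condition. The argument, due to Bing, proceeds by contradiction. Suppose some subcontinuum $Y \subseteq X$ decomposes as $Y = A \cup B$ with $A$ and $B$ proper; fix points $a \in A \setminus B$ and $b \in B \setminus A$, which lie in distinct elements of $\cD_n$ for all $n$ large. The continuum $Y$ must be covered, at each level $n+k$, by a sub-arc of $\cD_{n+k}$ running from the element containing $a$ to the one containing $b$. Applying the crookedness of $\cD_{n+k}$ inside $\cD_n$ produces the zig-zag indices $u<v$ provided by the definition, which yield intermediate subcontinua of $Y$ meeting both $a$ and $b$ but contained in arbitrarily small neighbourhoods of either $A$ or $B$. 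Iterating across all levels contradicts the disjoint decomposition $Y = A \cup B$. The real difficulty is the combinatorial bookkeeping required to propagate these zig-zags through successive refinements while tracking which elements of $\cD_n$ meet which subcontinuum.

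Finally, to distinguish $X$ from the pseudo-arc and to obtain the annular continuum property, I would pass to the annular cover $\widehat{\TT}_v = \RR^2 / \ZZ v$ of $\TT^2$, which is homeomorphic to the open cylinder $\SS^1 \times \RR$ and embeds in $S^2$. Since every $\cD_n$ contains loops of homotopy type $v$, the lifts of the $U_n$ to $\widehat{\TT}_v$ are connected open sets separating the two ends of the cylinder, and the lift $\widehat{X}$ of $X$ is a compact planar continuum separating these ends. Fearnley's characterization then identifies $X$ with the pseudo-circle; the separation of the ends of $\widehat{\TT}_v$ by $\widehat{X}$ is precisely the definition of $X$ being an annular continuum of homotopy type $v$ used in \cite{JP,JT}, which concludes the proof.
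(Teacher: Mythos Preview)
The paper does not prove this theorem. It is quoted as a classical result due to Bing and Fearnley (hence the attribution \cite{bing,fearnley2}) and is used as a black box in the construction; no argument for it appears anywhere in the paper. So there is no ``paper's own proof'' to compare your proposal against.

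That said, your outline is a faithful sketch of the classical route: circular chainability of $X$ is immediate from the nested chains, hereditary indecomposability is Bing's crookedness argument, and Fearnley's characterization then identifies $X$ with the pseudo-circle once one knows $X$ embeds in the plane and separates. Two points deserve care if you flesh this out. First, your contradiction argument for hereditary indecomposability is stated for a single level of crookedness ($\cD_{n+k}$ inside $\cD_n$), but what one actually needs is that $\cD_{n+1}$ is crooked in $\cD_n$ \emph{for every} $n$; the zig-zag has to be iterated through all scales, and the ``combinatorial bookkeeping'' you mention is genuinely the substance of Bing's proof, not a routine detail. Second, to invoke Fearnley you need $X$ itself (not just some lift) to be planar; your passage to the annular cover $\widehat{\TT}_v$ is correct, but you should note that the homotopy-type hypothesis guarantees that the restriction of the covering map to a single connected component of the preimage of $X$ is a homeomorphism onto $X$, so that $X$ really does embed in the cylinder and hence in $S^2$.
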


At some point later on, we will have to speak about lifts of circular
chains in the torus to the universal covering $\RR^2$, and similarly
about lifts of circular chains of intervals in the circle to $\RR$.
Suppose that $\cD$ is a circular chain of topological disks in $\TT^2$ as above
  and denote by $\pi:\RR^2\to\TT^2$ the canonical projection. Note
  that for each $D_\ell\in\cD$, the preimage $\pi^{-1}(\cD)$ consists
  of a countable number of connected components, each of which is a
  topological disk homeomorphic to $D_\ell$ and disjoint from all its
  integer translates. Suppose in addition that $\diam(D_\ell)<1/4$ for
  all $\ell\in\ZZ/N\ZZ$, so that none of the unions $D_\ell\cup
  D_{\ell+1}$ is essential in the torus (contains a homotopically
  non-trivial loop).

\begin{definitions*}
A \emph{lift} $\widehat\cD$ of $\cD$ is a sequence of topological disks $(\widehat D_\ell)_{\ell\in\ZZ}$ of $\RR^2$
  such that
\begin{itemize}
\item[--] for all $\ell\in\ZZ$ the disk $\widehat D_\ell$ is a
  connected component of $\pi^{-1}(D_\ell)$;
\item[--] $\widehat D_k$ intersects $\widehat D_\ell$ if and only if
  $k-\ell\in\{-1,0,1\}$.
\end{itemize}
The disc $\widehat D_{k+N}$ is the image of $\widehat D_{k}$ by translation by a vector
$v\in \ZZ^2$ which does not depend on the lift, nor on $k$, and is called the \emph{homotopy type of $\cD$}.
\end{definitions*}
Note that if $\cD$ and $\cD'$ are circular chains of topological disks with
homotopy type $v\in\ZZ^2\setminus\{0\}$ as above, $\cD'$ is crooked inside $\cD$
and $\widehat \cD,\widehat \cD'$ are lifts in the above sense, then there exists
a function $\hat \ell:\ZZ\to\ZZ$ (to which we refer as a lift of
$\ell:\ZZ/N'\ZZ\to\ZZ/N\ZZ$) such that
\begin{itemize}
\item[--] $\widehat D'_{i+N'}=\widehat D'_i+v$ and $\widehat D_{i+N}=\widehat
  D_i+v$;
\item[--] $\hat\ell(i+N')=\hat\ell(i)+N$;
\item[--] $\widehat D'_i\subseteq \widehat D_{\ell(i)}$ for all $i\in\ZZ$;
\item[--] if $i<j<i+N$ are such that for all $i<k<j$ the integer
  $\hat\ell(k)$ belongs to the interval bounded by $\hat\ell(i)$ and
  $\hat\ell(j)$ and $|\hat\ell(j)-\hat\ell(i)|>4$, then there exists
  $u,v$ with $i<u<v<j$ such that $|\hat\ell(u)-\hat\ell(j)|\leq 1$ and
  $|\hat\ell(v),\hat\ell(i)|\leq 1$.
\end{itemize}
All these remarks apply in an analogous way to circular chains of
intervals in the circle and their lifts to $\RR$. \medskip

During the construction, we will also use another notion of the crooking.

\begin{definition*} For $\varepsilon>0$, a continuous map $g\colon
  I\to \RR$ on the interval $I$ is \emph{$\varepsilon$-crooked} if
  for any $a<b$ in $I$, there are $a<c<d<b$ such that
  $|g(d)-g(a)|<\varepsilon$ and $|g(c)-g(b)|<\varepsilon$.
\end{definition*}

Note that $\varepsilon$-crooked maps exist for any $\varepsilon$ (see~\cite{bing2}).

\medskip

\paragraph{2.b -- Elements of the construction.} Let $\pi\colon
\TT^2\to \TT^1$ be the projection on the first coordinate. Let
$\cB(N)$ be the covering of the circle by $N$ open intervals defined
as follows
\begin{equation}\label{e.defB}
  \cB(N)=\{B_i, i\in \ZZ/N\ZZ\} \quad \mbox{where}\quad
  B_i=\left(\frac {i-5/4} {N},\frac{i+1/4}{N}\right).
\end{equation}
We will build inductively:
\begin{itemize}
\item[--] a sequence of integers $(N_n)_{n\geq 0}$,
\item[--] a sequence of positive real numbers $(\varepsilon_n)_{n\geq 0}$,
\item[--] a sequence of conjugating diffeomorphisms $(H_n)_{n\geq 0}$
  in $\diff^\omega_{\text{vol},0}(\TT^2)$,
\item[--] a sequence of rational rotations $(R_{\alpha_n})_{n\geq 0}$ of $\TT^2$.
\end{itemize} To $N_n,\varepsilon_n,H_n,R_{\alpha_n}$, we will
associate:
\begin{itemize}
\item[--] for each $x\in\TT^1$, the annulus $A_{n,x}$ which is the
  image under $H_n^{-1}$ of the vertical annulus
  $(x-\varepsilon_n,x+\varepsilon_n)\times \TT^1$,
\item[--] for each $x\in\TT^1$, the covering $\cD_{n,x}$ of the
  annulus $A_{n,x}$ defined by 
$$\cD_{n,x} = \left\{H_n^{-1}((x-\varepsilon_n,x+\varepsilon_n)\times B_i)\ \mid \
   B_i\in\cB(N_n)\right\}$$
(note that $\cD_{n,x}$ is a circular chain with $N_n$ elements),
\item[--] the projection $p_n=\pi_1\circ H_{n}: \TT^2\to\TT^1$ (where
  $\pi_1:\TT^2\to\TT^1$ is the projection to the first coordinate),
\item[--] the diffeomorphism $f_n=H_n^{-1}\circ R_{\alpha_n}\circ
  H_n\in \diff^\omega_{\text{vol},0}\left(\TT^2\right)$.
\end{itemize} 
We will denote by $\left(\frac{r_n}{q_n},\frac{s_n}{q_n}\right)$ the coordinates of
$\alpha_n$, with $r_n,s_n\in\mathbb{Z}$ and $q_n\in\NN-\{0\}$.

\paragraph{2.c -- Inductive properties.} The torus $\TT^2$ is
embedded as the subset $\{(z_1,z_2)\in \CC^2,\; |z_1|=|z_2|=1\}$ of
$\CC ^2$. One will consider homeomorphisms $f$ of $\TT^2$ such that
both $f$ and $f^{-1}$ extend as holomorphic functions defined on a
neighborhood of $\Delta=\left\{(z_1,z_2)\in \CC^2,\; |z_1|,|z_2|\in\left[\frac 1
  2, 2\right]\right\}$. One then introduces the supremum norm $\|.\|_\Delta$ on
$\Delta$ and the metric
$$ d_0(f,f')=\max(\|f,f'\|_\Delta,  \|f^{-1},{f'}^{-1}\|_\Delta)\ .$$
 The sequences $(N_n)$, $(\varepsilon_n)$, $(H_n)$,
$(R_{\alpha_n})$ will be constructed inductively so that the
following properties hold.
\begin{enumerate}
\item\label{a1} For each $x\in \TT^1$, the circular chain $\cD_{n+1,x}$ is
  crooked inside the circular chain $\cD_{n,x}$ (in particular, the annulus
  $A_{n+1,x}$ is contained in the interior of the annulus $A_{n,x}$), and the
  supremum of the diameter of the elements of the coverings $\cD_{n+1,x}$ is
  less $\frac{1}{n+1}$,
\item\label{a2} The angle $\alpha_{n+1}$ is close, but not
  equal, to $\alpha_n$. More precisely:
  $|\frac{r_{n+1}}{q_{n+1}}-\frac{r_n}{q_n}|$ and
  $|\frac{s_{n+1}}{q_{n+1}}-\frac{s_n}{q_n}|$ are smaller than $1/(2^{n+1}q_n)$ and
  the orbits of $R_{\alpha_{n+1}}$ are $\frac{1}{2^{n+1}}$-dense in $\TT^2$.
\item\label{a25} Every orbit of the diffeomorphism $f_{n+1}$ is
  $\frac{1}{n+1}$-dense in $\TT^2$.
\item\label{a3} The diffeomorphism $f_{n+1}$ is (very) close to $f_n$. More precisely,
both $f_{n+1}$, $H_{n+1}$ and their inverses
extend holomorphically on $(\CC\setminus \{0\})^2$ and satisfy:
\begin{enumerate}
\item $d_{0}(f_{n+1}^i,f_n^i)<\min (\frac{1}{2}d_{0}(f_{n}^i,f_{n-1}^i), \frac{1}{n})$ for $i=1,\dots,q_{n}$;
\item $d_{0}(f_{n+1},f_n)<\frac{\eta_n}{2}$ where $\eta_n$ is choosen such that,
  for every homeomorphism $g$ in the ball (for $d_0$) centered at $f_n$ of
  radius $\eta_n$, the rotation set of $g$ is contained in the ball centered at
  $\alpha_n$ of radius $\frac{1}{n}$.
\end{enumerate}
\item\label{a4} The projection $p_{n+1}$ is close to $p_{n}$ for the
  $C^0$-topology. More precisely: $d_{0}(p_{n+1},p_n)<\frac{1}{2^n}$.
\end{enumerate}

\begin{remarks*} The existence of the real number $\eta_n$ used in
  property~\ref{a4}.b is a consequence of the upper semi-continuity
  of the rotation set $\rho(F)$ with respect to $F$
  \cite[Corollary 3.7]{MZ}.

  Property~\ref{a1} (more precisely, the fact that $\cD_{n+1,x}$ is crooked
  inside $\cD_{n,x}$) implies that the sequence of conjugating diffeomorphisms
  $(H_n)$ will necessarily diverge. Nevertheless, the sequence of
  diffeomorphisms $(f_n)=(H_n^{-1}\circ R_{\alpha_n}\circ H_n)$ will converge
  (Property~\ref{a3}). This convergence is obtained by using the well-known
  ingredients of the Anosov-Katok method:
\begin{itemize}
\item[--] one first chooses a conjugating diffeomorphism $H_{n+1}$ of the form
  $H_{n+1}=h_{n+1}\circ H_n$, where $h_{n+1}$ might be very wild, but commutes
  with the rotation $R_{\alpha_n}$ ; this implies that $f_n=H_{n+1}^{-1}\circ
R_{\alpha_n}\circ H_{n+1}$ ;
\item[--] then, choosing $\alpha_{n+1}$ close enough to $\alpha_n$ is enough to
  ensure that $f_{n+1}=H_{n+1}^{-1}\circ R_{\alpha_{n+1}}\circ H_{n+1}$ is close
  to $f_{n}=H_{n+1}^{-1}\circ R_{\alpha_n}\circ H_{n+1}$.
\end{itemize}

A specific point in our construction is that, although the sequence
of diffeomorphisms $(H_n)$ will diverge, we require that the
sequence of maps $(\pi\circ H_n)$ converges (Property~\ref{a4}).
Indeed, we want that the fibers of $\pi_1\circ H_n$ converge to
pseudo-circles ``foliating'' $\TT^2$.
\end{remarks*}

\paragraph{2.d -- Proof of the theorem.} One can easily check that
the theorem follows from the inductive properties stated above. 
Properties~\ref{a1} imply that, for every $x\in\TT^1$, the sequence of
annuli $(A_{n,x})$ decreases and converges in Hausdorff topology to
a pseudo-circle $\cC_x$. Moreover, the collection of pseudo-circles
$\cC=\{\cC_x\}_{x\in\TT^1}$ is a partition of $\TT^2$; this follows
from the following fact:
\begin{itemize}
\item[--] for every $n$, the collection of annuli $\{A_{n,x}, x\in\TT^1\}$ covers $\TT^2$,
\item[--] for any $x\neq x'$, the annuli $A_{n,x}$ and $A_{n,x'}$ are
  disjoint if $n$ is large enough.
\end{itemize}

Property~\ref{a3}.a implies that the sequence $(f_n)$ converges to
an holomorphic function $f$ on the interior of $\Delta$. The same
holds for $(f_n^{-1})$. Consequently, the restriction of $f$ to
$\TT^2$ is a real-analytic diffeomorphism.  Since each $f_n$ is
volume-preserving, $f$ belongs to
$\diff^\omega_{\text{vol},0}(\TT^2)$.

Given $x\in\TT^1$, let $x':=x+\pi(\alpha_n)$. Then $f_n(A_{n,x})=A_{n,x'}$. From
property~\ref{a1}, one deduces that $A_{n+1,x}$ is mapped by $f_n$ inside
$A_{n,x'}$ and $A_{n+1,x'}$ is mapped by $f_n^{-1}$ inside $A_{n,x}$. Hence, the
annulus $f(A_{n+1,x})$ is contained in the $d_{0}(f,f_n)$-neighbourhood of the
annulus $A_{n,x'}$ and the the annulus $f^{-1}(A_{n+1,x'})$ is contained in the
$d_{0}(f,f_n)$-neighbourhood of the annulus $A_{n,x}$. Since $d_{0}(f,f_n)$
tends to $0$ as $n$ goes to infinity, this implies that $f$ preserves the
partition in pseudo-circles $\cC=\{\cC_x\}_{x\in\TT^1}$.

Property~\ref{a2} implies that the sequence $(\alpha_n)$ converges towards some
$\alpha\in\RR^2$. It also implies that the $q_n$ first iterates of $R_\alpha$ are $1/2^{n+1}$
close to those of $R_{\alpha_{n+1}}$, hence are $\frac{1}{2^n}$-dense in $\TT^2$.
Consequently $\alpha$ is totally irrational.

Properties~\ref{a3} imply that the rotation set of $f$ is
reduced to $\{\alpha\}$ (indeed, they imply that
$d_{0}(f,f_n)<\eta_n$ and therefore the rotation set of $f$ is contained in the
ball of radius $\frac{1}{n}$ centered at $\alpha_n$ for every $n$). Hence $f$ is
an irrational pseudo-rotation.

Consider a point $z\in\TT^2$. For every $n$, according to
property~\ref{a25}, the orbit of $z$ under $f_n$ is
$\frac{1}{n}$-dense in $\TT^2$. But the orbit of $z$ under $f_n$ is
periodic of period less than $q_n$ (since $f_n$ is conjugate to the
rotation $R_{\alpha_n}$). Using property~\ref{a3}.a, we obtain that
the orbit of $z$ under $f$ remains at distance less than
$\frac{2}{n}$ of the orbit of $z$ under $f_n$ for a time $q_n$.
Combined with property 3, this means that the orbit of $z$ under $f$
is $\frac{3}{n}$-dense in $\TT^2$. Since $n$ is arbitrary, $f$ is
minimal.

Property~\ref{a4} implies that the sequence of maps $(p_n)$ converges in
topology $C^0$ towards a continuous map $p$. For each $n$, the map $p_n$
semi-conjugates $f_n$ to the rotation of $\TT^1$ with angle
$\pi(\alpha_n)$. Passing to the limit, it follows that the map $p$
semi-conjugates $f$ to the rotation of angle $\pi(\alpha)$. So we get all the
conclusions of the theorem.  \qed

\section{Inductive construction} Now we explain how to construct a
sequence of integers $(N_n)_{n\geq 0}$, a sequence of real numbers
$(\varepsilon)_{n\geq 0}$, a sequence of conjugating diffeomorphisms
$(H_n)_{n\geq 0}$ and a sequence of vectors $(\alpha_n)_{n\geq 0}$, so
that properties 1\ldots 5 are satisfied. We assume that the sequences
have already been constructed up to rank $n$. We will construct
$N_{n+1},\varepsilon_{n+1},H_{n+1},\alpha_{n+1}$.

\paragraph{3.a -- Preliminary constructions.} Recall that we denote by
$\left(\frac{p_n}{q_n},\frac{r_n}{q_n}\right)$ the coordinates of
$\alpha_n$. We introduce a periodic linear flow
$\varphi_{n+1}:(t,(x,y))\mapsto
(x,y)+t\cdot\alpha_n+t\cdot(0,p_nb_{n+1})$ on $\TT^2$ where $b_{n+1}$
is an integer which will be specified below. Observe that the time 1
map of this flow is the rotation $R_{\alpha_n}$. The first return map
of $\varphi_{n+1}$ on the vertical circle $\{x\}\times\TT^1$ is the
time $\frac{q_n}{p_n}$ map of $\varphi_{n+1}$. Denote by $m_n$ the
period of this first return map, and observe that $m_n$ depends on
$\alpha_n$, but does not depend on the choice of the integer
$b_{n+1}$.

We also introduce a $C^\omega$ map $\theta_n\colon \RR\to \RR$ satisfying the following properties.
\begin{itemize}
\item[--] $x\mapsto \theta_n(m_nx)-m_nx$ is a trigonometric polynomial
  (hence $\theta_n-\id$ is $1/m_n$-periodic),
\item[--] $\theta_n(0)=0$, $\theta_n\left(\frac 1{2m_n}\right)=2$,
  $\theta_n\left(\left[0,\frac 1{2m_n}\right]\right)=[0,2]$ and $\theta_n$ is
  $\frac{1}{4N_n}$-crooked on $\left[0,\frac 1{2m_n}\right]$,
\item[--] $\theta_n\left(\frac 1{m_n}\right)=\frac 1{m_n}$,
  $\theta_n\left(\left[\frac 1{2m_n},\frac
      1{m_n}\right]\right)=\left[\frac 1{m_n},2\right]$ and $\theta_n$
  is $\frac{1}{4N_n}$-crooked on $\left[\frac 1{2m_n},\frac 1{m_n}\right]$.
\end{itemize}

Note that $\theta_n-\id$ induces a function on $\TT^1$ which extends
holomorphically to $\CC\setminus \{0\}$. \medskip  Recall that $\cB(N)$
denotes the covering of the circle by $N$ compact intervals defined
by~\eqref{e.defB}. Moreover, $\theta$ itself induces a degree one map on the
circle, which we denote by $\theta$ again for simplicity.

\begin{claim*} If $N_{n+1}$ is large enough, then, for any
  $\omega\in \TT^1$, the circular chain of intervals
  $\{\theta_{n}(B-\omega)+\omega, B\in \cB(N_{n+1})\}$ is crooked
  inside the circular chain $\cB(N_{n})$.
\end{claim*}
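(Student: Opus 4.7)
The plan is to use Lipschitz control on $\theta_n$ to set up a refinement map, and then deduce the crookedness of the new chain from the $\frac{1}{4N_n}$-crookedness of $\theta_n$. Since $\theta_n$ is real-analytic on the compact torus $\TT^1$, its derivative is bounded; set $C_n := \|\theta_n'\|_\infty < \infty$. I then choose $N_{n+1}$ large enough that $\frac{3C_n}{2N_{n+1}} < \frac{1}{8N_n}$, so each image $\tilde B_i := \theta_n(B_i-\omega)+\omega$ (for $B_i \in \cB(N_{n+1})$) is an arc of diameter less than $\frac{1}{8N_n}$. Since intervals of $\cB(N_n)$ have length $\frac{3}{2N_n}$ and adjacent ones share a subinterval of length $\frac{1}{2N_n}$, each $\tilde B_i$ fits inside at least one $B_{\ell(i)} \in \cB(N_n)$, defining a map $\ell \colon \ZZ/N_{n+1}\ZZ \to \ZZ/N_n\ZZ$ with $\tilde B_i \subset B_{\ell(i)}$. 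That $\{\tilde B_i\}$ itself forms a circular chain will follow from continuity of $\theta_n$, the overlap pattern of $\cB(N_{n+1})$, and the degree-one property of $\theta_n$ combined with the smallness of the $B_i$.

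To verify crookedness, fix $i<j$ in $\ZZ/N_{n+1}\ZZ$ such that every $\ell(k)$ with $i<k<j$ lies in a single arc $A \subset \ZZ/N_n\ZZ$ of length greater than $4$ bounded by $\ell(i),\ell(j)$. After lifting to $\RR$, pick $a \in B_i-\omega$ and $b \in B_j-\omega$. The hypothesis that every intermediate $\tilde B_k \subset B_{\ell(k)}$ with $\ell(k)\in A$ forces $\theta_n([a,b])$ to lie in a short arc of $\TT^1$ of length on the order of $1/N_n$; because $\theta_n$ varies by nearly $2$ on each half-period $[k/(2m_n),(k+1)/(2m_n)]$, this implies $[a,b]$ is contained in a single such half-period, on which $\theta_n$ is $\frac{1}{4N_n}$-crooked. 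Applying the crookedness on $[a,b]$ produces $a<c<d<b$ with $|\theta_n(c)-\theta_n(b)|<\frac{1}{4N_n}$ and $|\theta_n(d)-\theta_n(a)|<\frac{1}{4N_n}$. Letting $u,v$ be the indices with $c \in B_u-\omega$ and $d \in B_v-\omega$, we get $i<u<v<j$, and the proximity $|\theta_n(c)-\theta_n(b)|<\frac{1}{4N_n}$ places a point of $B_{\ell(u)}$ within $\frac{1}{4N_n}$ of a point of $B_{\ell(j)}$. Since non-adjacent intervals of $\cB(N_n)$ are separated by at least $\frac{1}{2N_n}$, this forces $d(\ell(u),\ell(j)) \leq 1$; symmetrically, $d(\ell(v),\ell(i)) \leq 1$, as required.

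The main obstacle will be confirming rigorously that the arc-length-$>4$ hypothesis indeed confines $[a,b]$ to a single crookedness half-period of $\theta_n$, since this is what allows the $\varepsilon$-crookedness to be applied directly. The heuristic is that the variation of $\theta_n$ on any full half-period is close to $2$, vastly exceeding the arc length $\sim 1/N_n$ permitted on $\theta_n([a,b])$ by the hypothesis, so $[a,b]$ cannot straddle a half-period boundary. A secondary technical point is the non-adjacent disjointness of $\{\tilde B_i\}$, which requires careful handling of the critical points of $\theta_n$ but should hold once $N_{n+1}$ is chosen large enough that the scale $1/N_{n+1}$ of the $B_i$ is smaller than the minimal separation between critical values of $\theta_n$.
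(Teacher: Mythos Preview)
Your setup of the refinement map $\ell$ via a Lipschitz bound is fine and parallels the paper. The genuine gap is in the second paragraph, at the assertion that the hypothesis ``forces $\theta_n([a,b])$ to lie in a short arc of $\TT^1$ of length on the order of $1/N_n$''. This misreads the crookedness hypothesis: the arc $A\subset\ZZ/N_n\ZZ$ bounded by $\ell(i)$ and $\ell(j)$ is required to have length \emph{greater} than $4$, and nothing prevents it from being as large as $N_n-1$. Hence $\theta_n([a,b])$ can have diameter close to $1$ in $\TT^1$, not $O(1/N_n)$. With this correction your mechanism for confining $[a-\omega,b-\omega]$ to a single half-period $I_k=[k/(2m_n),(k+1)/(2m_n)]$ collapses: $[a-\omega,b-\omega]$ can have length close to $1$, while each $I_k$ has fixed length $1/(2m_n)$, so it may span many half-periods. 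Comparing the variation $\sim 2$ of $\theta_n$ on a half-period to the (allegedly short) image does not recover the conclusion; at best it would show $[a-\omega,b-\omega]$ cannot contain a full $I_k$, which is a much weaker statement.

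The paper does not try to place $[a,b]$ inside a single $I_k$. Instead it uses a structural feature of $\theta_n$ that your argument ignores: for every $k$, the value $\theta_n(k/(2m_n))$ is an \emph{endpoint} of the image $\theta_n(I_{k-1}\cup I_k)$, and each $\theta_n(I_k)$ has length exceeding $1$. From this one produces $a',b'$ with $a\le a'<b'\le b$, with $\theta_n(a'-\omega)=\theta_n(a-\omega)$ and $\theta_n(b'-\omega)=\theta_n(b-\omega)$, and with $a'-\omega,b'-\omega$ lying in a \emph{common} $I_k$. The $\frac{1}{4N_n}$-crookedness is then applied on $[a',b']$; since $a',b'$ share their $\theta_n$-images with $a,b$, the resulting $c,d$ satisfy the required proximity to $\theta_n(a-\omega),\theta_n(b-\omega)$, and your endgame (choosing $u,v$ and checking $d(\ell(u),\ell(j))\le 1$, $d(\ell(v),\ell(i))\le 1$) goes through. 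This replacement of $a,b$ by equivalent points in a single half-period is the idea missing from your proposal.
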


\begin{proof}

  We work with a lift of the family $\cB(N)$, in the sense discussed in
  Section~\ref{Crooking}, obtained as a covering $\widehat \cB(N)$ of the real
  line by intervals of the form:

$$\widehat B_{i}=\bigg(\frac {i-5/4} {N},\frac{i+1/4}{N}\bigg), \quad i\in \ZZ.$$
 If $N_{n+1}$ is large enough, each interval $\theta_{n}(\widehat
B_i-\omega)+\omega$ with $\widehat B_i\in \widehat \cB(N_{n+1})$ has
lenght strictly less than $\frac{1}{2N_n}$ and therefore is contained
in an interval $\widehat B'_{\hat\ell(i)}$ of the family $\widehat
\cB(N_n)$. Since $\theta_n$ has degree $1$, one can choose the
function $\hat\ell$ such that
$\hat\ell(i+N_{n+1})=\hat\ell(i)+N_{n}$.

Let $i<j$ be two integers such that $\hat\ell(k)$ belongs to the
interval bounded by $\hat\ell(i)$ and $\hat\ell(j)$ for each $i<k<j$
and such that $4<|\hat\ell(j)-\hat\ell(i)|<N_n$. Let us choose $i\leq
i'<j'\leq j$ such that $\hat\ell(i')=\hat\ell(i)$,
$\hat\ell(j')=\hat\ell(j)$ and some points $a\in\widehat B_{i'}$ and
$b\in \widehat B_{j'}$.  One considers $a\leq a'<b'\leq b$ such
that $a,a'$ (resp. $b,b'$) have the same image by $x\mapsto
\theta_{n}(x-\omega)$. One can assume that $a'-\omega,b'-\omega$
belong to the same interval $I_k:=\left[\frac {k}{2m_n},\frac
  {k+1}{2m_n}\right]$: indeed, for each $k$, the image under
$\theta_n$ of $I_k$ has length $>1$ and the point
$\theta_n\left(\frac {k}{2m_n}\right)$ is an end point of
$\theta_n(I_{k-1}\cup I_k)$.

Now the points $a'-\omega$ and $b'-\omega$ belong to the same
interval $I_k=\left[\frac {k}{2m_n},\frac {k+1}{2m_n}\right]$. Since $\theta_{n}$
is $(4N_n)^{-1}$-crooked on this interval, this implies that there exists $a'<c<d<b'$ such that
\begin{itemize}
\item[--] $\theta_{n}(a-\omega)=\theta_{n}(a'-\omega)$ and $\theta_{n}(d-\omega)$ are $(4N_n)^{-1}$-close,
\item[--] $\theta_{n}(b-\omega)=\theta_{n}(b'-\omega)$ and $\theta_{n}(c-\omega)$ are $(4N_n)^{-1}$-close.
\end{itemize} Hence $d$ is contained in an interval $\widehat B_v$
such that $|\hat\ell(v)-\hat\ell(i)|\leq 1$. Similarly, $c$ is
contained in an interval $\widehat B_u$ such that
$|\hat\ell(u)-\hat\ell(i)|\leq 1$. Since $a<c<d<b$ and
$2<|\hat\ell(j)-\hat\ell(i)|$ one gets $i<u<v<j$.

In the projection to $\TT^1$, the above shows that the circular chain
$\{\theta_{n}(B-\omega)+\omega, B\in \cB(N_{n+1})\}$ is crooked inside
the circular chain $\cB(N_n)$ as announced.
\end{proof}

Since $m_n$ only depends on $\alpha_n$, one can fix the map $\theta_{n}$ but choose $b_{n+1}$ and $N_{n+1}$ arbitrarily large later
in the construction.

\paragraph{3.b -- Construction of $H_{n+1}$.} Consider the map $\Theta_{n+1}:\{0\}\times\TT^1\to\TT^1$ defined by $\Theta_{n+1}:(0,y)\mapsto \theta_n(y)-y$. 
Since $\theta_{n}-\id$ is $1/m_n$-periodic, and since the period of the return map associated to the linear flow $\varphi_{n+1}$
on $\{0\}\times \TT^1$ is equal to $m_n$, this maps extends to a map $\Theta_{n+1}:\TT^2\to \TT^1$ which is constant along the orbits of $\varphi_{n+1}$.
We define $H_{n+1}$ by setting $H_{n+1}:=h_{n+1}\circ H_n$, where
$$
h_{n+1}(x,y)=\varphi_{n+1}\left(-\frac{\Theta_{n+1}(x,y)}{p_nb_{n+1}},(x,y)\right)=\bigg(x,y-\Theta_{n+1}(x,y)\bigg)-\frac{\Theta_{n+1}(x,y)}{q_nb_{n+1}}\left(1,
\frac{r_n}{p_n}\right).
$$
Since $\Theta_{n+1}$ is constant along the orbits of $\varphi_{n+1}$, one has 
\begin{equation}\label{e.formulah}
h_{n+1}^{-1}(x,y)=\bigg(x,y+\Theta_{n+1}(x,y)\bigg)+\frac{\Theta_{n+1}(x,y)}{q_nb_{n+1}}\left(1,
\frac{r_n}{p_n}\right).
\end{equation}
Clearly, $h_{n+1}$ (and hence $H_{n+1}$) belongs to
$\diff^\omega_{\text{vol},0}(\TT^2)$. Note also that both $h_{n+1}$
and $h_{n+1}^{-1}$ extend holomorphically to the domain $(\CC\setminus
\{0\})^2$.

\begin{figure}[h!]
\vspace{-0.3cm}
\begin{center}
\includegraphics[width=0.9\linewidth]{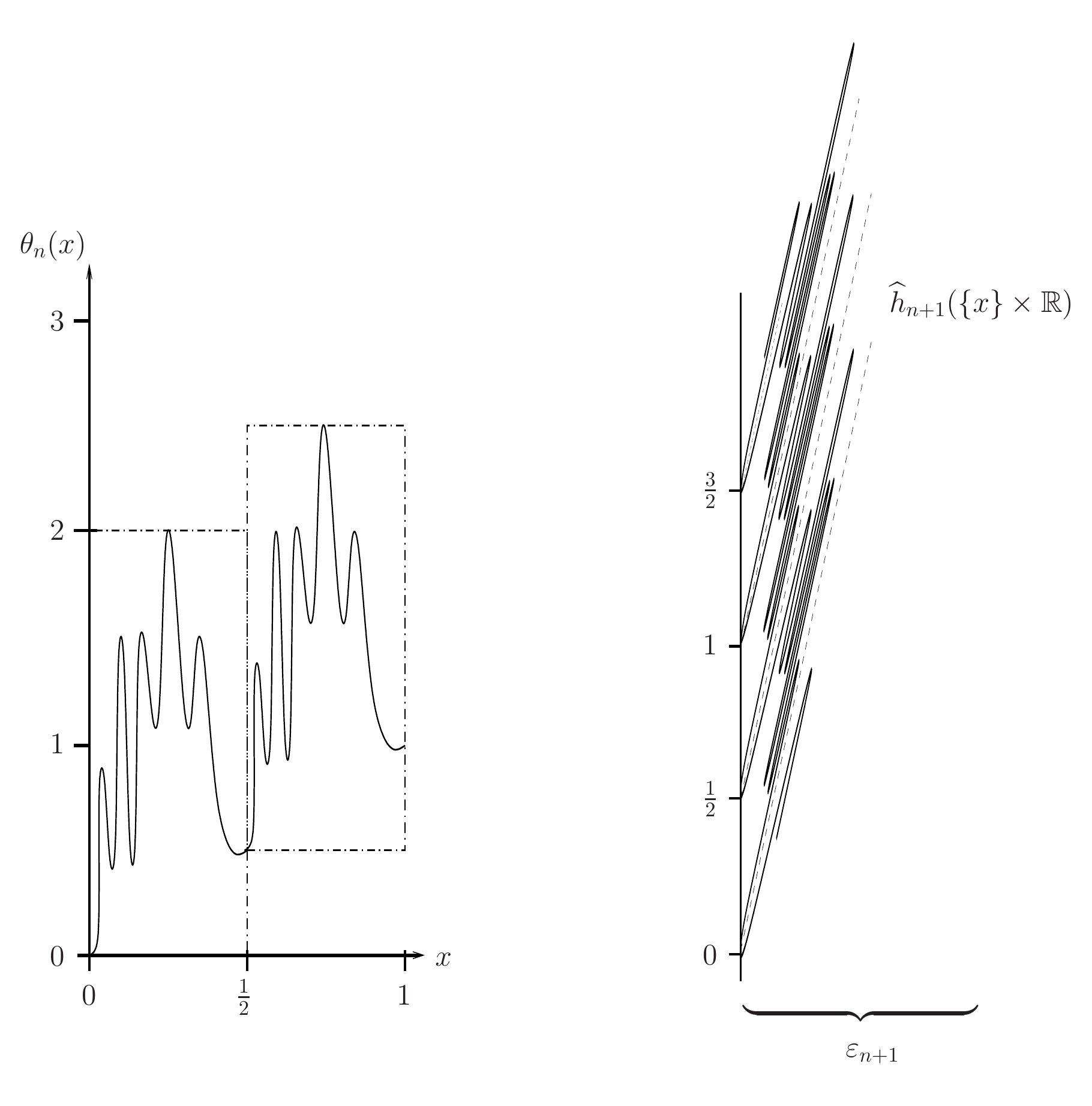}
\end{center}
\vspace{-1cm}
\caption{\it Choice of the function $\theta$ (on the right) and the image of a
  vertical line $\{x\}\times \mathbb{R}$ under the lift $\widehat h_{n+1}$ of
  $h_{n+1}$ (schematic picuture with $m_n=2$). Note that the preimages of vertical lines under $h_{n+1}$ have a
  similar {\em `crookedness'}, which is the fact that is needed for our
  construction (see  next section). \label{f.theta} The horizontal size of
  images (and preimages) of vertical lines under $h_{n+1}$ is small compared to
  $\varepsilon_{n+1}$. This size is controlled by the flow lines of $\varphi_{n+1}$ (dashed lines),
  whose direction is given by the almost vertical vector
  $\left(\frac{1}{b_{n+1}q_n+(r_n/p_n)},1\right)$. }
\end{figure}

\paragraph{3.c -- Choice of $b_{n+1}$, $\varepsilon_{n+1}$ and
  $N_{n+1}$.} 

Now, we explain how to fix the values of $b_{n+1}$,
$\varepsilon_{n+1}$, and $N_{n+1}$  so that properties~\ref{a1}
and~\ref{a4} hold.  From~\eqref{e.formulah}, if $b_{n+1}$ is large
enough, the image under $h_{n+1}^{-1}$ of every point $(x,y)\in\TT^2$
is arbitrarily close to $\left(x,y+\Theta_{n+1}(x,y)\right)$. Now
observe that, for each $x\in \TT^1$, there exists $\omega_x\in \TT^1$
such that
\begin{equation*}
\Theta_{n+1}(x,y)=\theta_{n}(y-\omega_x)-y+\omega_x.
\end{equation*}
As a consequence, if $b_{n+1}$ is large enough, the image under
$h_{n+1}^{-1}$ of every point $z=(x,y)\in\TT^2$ is arbitrarily close
to $(x,\theta_{n}(y-\omega_x)+\omega_x)$. Hence, if $b_{n+1}$ is large
enough and $\varepsilon_{n+1}$ is small enough, the image under
$h_{n+1}^{-1}$ of a rectangle
$(x-\varepsilon_{n+1},x+\varepsilon_{n+1})\times B$ is contained in an
arbitrary small neighbourhood of the square $\{x\}\times
(\theta_{n}(B-\omega_x)+\omega_x)$. Using the claim above, this
implies that the family $\cD_{n+1,x}$ is crooked inside $\cD_{n,x}$,
provided $N_{n+1}$ is large enough. By continuity of $H_{n+1}^{-1}$,
the diameter of the elements of the covering $\cD_{n+1,x}$ is less
than $\frac{1}{n+1}$ if $b_{n+1},N_{n+1}$ are large enough and
$\varepsilon_{n+1}$ is small enough. We have thus checked that
property~\ref{a1} holds, provided that $b_{n+1},N_{n+1}$ are chosen
large enough and $\varepsilon_{n+1}$ is chosen small enough.

By definition of $p_n$, $p_{n+1}$, and $H_{n+1}$, in order to check
that property~\ref{a4} is satisfied, it is enough to check that
$\pi\circ h_{n+1}$ is close to $\pi$ for the $C^0$-topology. This is a
direct consequence of the definition of $h_{n+1}$, provided that
$b_{n+1}$ is chosen large enough.

\paragraph{3.d -- Choice of $\alpha_{n+1}$.}
Clearly, one can choose $\alpha_{n+1}$ arbitrarily close to $\alpha_n$
so that property~\ref{a2} holds. By uniform continuity of $H_{n+1}$
and $H_{n+1}^{-1}$, there exists $\eta$ so that the orbits of
$f_{n+1}=H_{n+1}\circ R_{\alpha_{n+1}}\circ H_{n+1}^{-1}$ are
$\frac{1}{n+1}$-dense in $\TT^2$ provided that the orbits of the
rotation $R_{\alpha_{n+1}}$ are $\eta$-dense in $\TT^2$. One can thus
choose $\alpha_{n+1}$ arbitrarily close to $\alpha_n$ so that
property~\ref{a25} is satisfied.   By construction both $f_{n+1}$
and $f_{n+1}^{-1}$ extend holomorphically to $(\CC\setminus
\{0\})^2$. Moreover the diffeomorphism $h_{n+1}$ commutes with the
flow $\varphi_{n+1}$, hence with the rotation $R_{\alpha_n}$. 
Consequently $f_n=H_{n+1}^{-1}\circ R_{\alpha_n} \circ H_{n+1}$. This
shows that $f_{n+1}$ is arbitrarily close to $f_n$ when $\alpha_{n+1}$
is chosen arbitrarily close to $\alpha_n$. In particular,
property~\ref{a3} holds provided that $\alpha_{n+1}$ is chosen close
enough to $\alpha_n$.

\section{Uniqueness of the semi-conjugacy, non-existence of wandering
  curves and further remarks}\label{s.unique}

The aim of this last section is to discuss, somewhat informally, the
implications of our construction for some  questions arising in the
context of dynamics and rotation theory on the two-torus. Throughout this
section, we assume $f$ is an irrational pseudo-rotation with an invariant
foliation of pseudo-circles, consisting of the fibres of a semi-conjugacy
$p:\TT^2\to\TT^1$ to an irrational rotation $R_\alpha$. Moreover, we will freely
add further assumptions on $f$ if these can easily be ensured in the preceeding
Anosov-Katok-construction.
We first note that the semi-conjugacy in
Theorem~\ref{t.main} is unique, modulo post-composition by rotations.

\begin{proposition}
  The semi-conjugacy $p$ in Theorem~\ref{t.main} is uniquely determined: any
  continuous map $p'$ which is homotopic to $p$ and semi-conjugates $f$ to the
  same circle rotation as $p$ can be written as $p'=R\circ p$ where $R$ is a
  rotation of the circle.
\end{proposition}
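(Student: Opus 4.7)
The plan is to consider the difference $\phi \colon \TT^2\to\TT^1$ defined by $\phi(z) = p'(z) - p(z)$, which makes sense using the abelian group structure of $\TT^1$. The starting observation is that both $p$ and $p'$ intertwine $f$ with the same translation $R_{\pi(\alpha)}$, so
$$
\phi(f(z)) = p'(f(z)) - p(f(z)) = \bigl(p'(z) + \pi(\alpha)\bigr) - \bigl(p(z) + \pi(\alpha)\bigr) = \phi(z),
$$
which is to say that $\phi$ is $f$-invariant.

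The Main Theorem guarantees that $f$ is minimal, so every $f$-orbit is dense in $\TT^2$. Combined with the $f$-invariance, this forces the continuous function $\phi$ to take a single value $c\in\TT^1$. Setting $R = R_c$ then yields the desired identity $p' = R\circ p$.

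The homotopy assumption enters only to set up the lift. Since $p'\simeq p$, the difference $\phi$ is null-homotopic as a map $\TT^2\to\TT^1$, and therefore lifts to a continuous function $\widetilde\phi\colon\TT^2\to\RR$. On the lifted level the relation $\phi\circ f = \phi$ becomes $\widetilde\phi\circ f = \widetilde\phi + k$ for some integer $k$ (constant by connectedness of $\TT^2$); iterating gives $\widetilde\phi\circ f^n = \widetilde\phi + nk$, which is incompatible with the boundedness of $\widetilde\phi$ on the compact torus unless $k=0$. Thus $\widetilde\phi$ is itself $f$-invariant and, by minimality again, constant, recovering the conclusion in its real-valued form.

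No serious obstacle is expected: the whole argument rests on the minimality of $f$ (already established in Section~2.d) together with the elementary principle that a continuous invariant function on a minimal system is constant. The only place where one might worry is the step combining density of orbits with continuity to deduce that $\phi$ is globally constant, but this is standard.
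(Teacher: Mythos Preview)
Your argument is correct, but it follows a genuinely different route from the paper's. The paper does not argue directly; it simply invokes two external results: Corollary~4.3 of \cite{JP}, which gives uniqueness of the semi-conjugacy whenever the non-wandering set is \emph{externally transitive}, together with Theorem~A of \cite{Po}, which establishes that external transitivity for irrational pseudo-rotations. Your proof bypasses both citations by exploiting the stronger property that the specific $f$ constructed in the theorem is \emph{minimal}: the difference $\phi=p'-p$ is a continuous $f$-invariant map to $\TT^1$, hence constant on a dense orbit, hence constant everywhere.

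What each approach buys: your argument is entirely self-contained (it uses only the minimality already proved in Section~2.d and the standard fact that continuous invariant functions on minimal systems are constant), and as you observe it actually shows the homotopy hypothesis is redundant --- any $p'$ semi-conjugating to the same rotation is automatically of the form $R_c\circ p$, hence homotopic to $p$. The paper's route, on the other hand, is more robust: the cited results of \cite{JP} and \cite{Po} apply to arbitrary irrational pseudo-rotations, not just minimal ones, so the uniqueness statement would survive even if the Anosov--Katok construction were modified so as to lose minimality.

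One small remark: your lifted argument in the last paragraph is fine but unnecessary --- the $\TT^1$-valued version already suffices, since the target is Hausdorff and constancy on a dense orbit plus continuity gives global constancy without ever lifting to $\RR$.
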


This follows directly by combining \cite[Corollary 4.3]{JP} (uniqueness of the
semi-conjugacy provided the non-wandering set is externally transitive) with
\cite[Theorem A]{Po} (external transitivity of the non-wandering set of
irrational pseudo-rotations).
\medskip

The uniqueness of the semi-conjugacy further allows to see that $f$
does not admit any loop which is wandering (i.e. disjoint from all
its iterates) and has the same homotopy type as the pseudo-circles
(that is, homotopy vector $v_2=(0,1)$ in our construction). The
reason for this is the fact that the existence of such a loop
$\Gamma$ would allow to construct a semi-conjugacy $\tilde p$ to the
rotation $R_\alpha$ such that $\Gamma$ is contained in a single
fibre of the semi-conjugacy. Details of this construction can be
found in \cite[Lemma 3.2]{JP} (the fact that $\Gamma$ is contained in
a single fibre is not mentioned explicitly, but is obvious from the
proof). Due to the uniqueness of the semi-conjugacy (modulo
rotations) and the fact that none of the pseudo-circles of the
foliation contains any non-degenerate curves, this yields a
contradiction.

More generally, it is even possible to show that $f$ does not admit
any loops disjoint of all its iterates, regardless of the homotopy
type. This is slightly more subtle, and we only sketch the argument.
The crucial observation is the fact that we may construct $f$ such
that 
\begin{equation}
  \label{e.deviations}
  \sup \left|\left\langle F^n(z)-z-n\rho,v\right\rangle\right| \ < \ 
\infty \quad \textrm{iff} \quad v=(1,0) \ ,
\end{equation}
where $F: \RR^2\to\RR^2$ is a lift of $f$ and $\rho\in\RR^2$ is the
corresponding rotation vector. Now, if there exists a wandering loop of homotopy
type $w\in\ZZ^2\setminus\{0\}$, then it is not hard to see that
\[
\sup \left|\left\langle F^n(z)-z-n\rho,w^\perp\right\rangle\right| \ < \ \infty \ .
\]
However, according to (\ref{e.deviations}) this is only possible if $w=(0,1)$,
and this is exactly the homotopy type of the pseudo-circles which was excluded
before. Homotopically trivial wandering loops cannot exist by minimality, and
therefore no loop of any homotopy type can be wandering.

Roughly speaking, in order to prove (\ref{e.deviations}) one has to use the fact
that since the leaves of the foliations $\cV_k$ are increasingly crooked,
connected fundamental domains of these circles in the lift become arbitrarily
large in diameter. Since an iterate of $f_k$ acts as a rotation on these leaves,
this allows to see that for suitable integers $n_k$ the vertical deviations
$\left|\left\langle F_k^{n_k}(z)-z-n_k\rho,w^\perp\right\rangle\right|$ become
arbitrarily large. If the $f_k$ converge to $f$ sufficiently fast, then this
carries over to the limit and yields unbounded vertical deviations for $f$. At
the same time horizontal deviations (that is, $v=v_1=(1,0)$ in
(\ref{e.deviations})) are bounded due to the existence of the semi-conjugacy
(e.g. \cite[Lemma 3.1]{JT}). Together, these two facts yield unbounded
deviations for all $v\neq v_1$.
\medskip

The fact that $f$ does not admit any wanding loops is of some interest
in the context of the Arc Translation Theorem due to Kwapisz
\cite{Kwa,BCL}, which asserts that given an irrational
pseudo-rotation and any integer $n$, there exist essential loops
which are disjoint from their first $n$ iterates. It is natural to
ask under what additional assumptions this statement can be
strengthened by passing from a finite number to all iterates. A
natural obstruction is certainly to have unbounded deviations in all
directions, as discussed above. However, our example shows that even
if the deviations are bounded in some direction, the existence of a
wandering curve is not guaranteed. Thus, in this sense the statement
of the Arc Translation Theorem is optimal, and essential loops have
to be replaced by more general classes of essential continua in order
to obtain results in infinite time.  \medskip

Finally, we want to mention a loose connection of our construction to the
Franks-Misiurewicz Conjecture \cite{franks-misiurewicz}. The latter asserts 
that if the rotation set of a
torus homeomorphism is a line segment of positive length, then either it
contains infinitely many rational points, or it has a rational endpoint. As
mentioned in the introduction, Avila has recently announced a counterexample to
this conjecture for the case where the rotation segment has irrational slope,
but the line it defines does not pass through a rational point. Conversely, Le
Calvez and Tal have announced the first positive partial result on the
conjecture: if the rotation set is a segment with irrational slope, it cannot
contain a rational point in its relative interior. One case which is still
completely open, however, is whether the rotation set can be a line segment with
rational slope, but without rational points -- for example, of the form
$\{\alpha\}\times [a,b]$ with $\alpha\in\RR$ irrational and $a<b$.  Now, in the
situation where there exists a semi-conjugacy $p$, homotopic to $\pi_1$, to the
rotation $R_\alpha$ on the circle, the rotation set has to be contained in the
line $\{\alpha\}\times\RR$ \cite[Lemma 3.1]{JT}. Hence, this is a natural class
of maps to look for counterexamples to this subcase of the Franks-Misiurewicz
Conjecture. However, it is known that in order to have a non-degenerate rotation
interval, the fibres of the semi-conjugacy need to have a complicated structure
-- more precisely, they need to be indecomposable \cite{JP}. Our example shows
that such a rich fibre structure is possible in principle. Yet, whether a
non-degenerate rotation interval can be achieved remains open. Here, the fact
that the Anosov-Katok method typically leads to uniquely ergodic examples
suggests that a different approach would be needed to produce such examples, if
these exists at all.

\bigskip

\noindent
\emph{Fran\c cois B\'eguin},
{\small LAGA,  CNRS - UMR 7539, Universit\'e Paris 13, 93430 Villetaneuse, France.}\\

\noindent
\emph{Sylvain Crovisier}, {\small LMO, CNRS - UMR 8628, Universit\'e Paris-Sud 11, 91405 Orsay, France.}\\

\noindent
\emph{Tobias J\"ager}, {\small Institute of Mathematics,
  Friedrich-Schiller-University Jena, 07743 Jena, Germany.}

\end{document}